\newtheorem{theorem}{Theorem}[section]
\newtheorem{corollary}[theorem]{Corollary}
\newtheorem{remark}[theorem]{Remark}
\title{On minimal manifolds}
\begin{document}
\maketitle

\begin{center}Jan P. Boro\'nski\footnote{AGH University of Science and Technology, Faculty of Applied Mathematics, al. Mickiewicza 30, 30-059 Krak\'ow, Poland -- and -- National Supercomputing Centre IT4Innovations, Division of the University of Ostrava, Institute for Research and Applications of Fuzzy Modeling, 30. dubna 22, 70103 Ostrava, Czech Republic, e-mail: boronski@agh.edu.pl} and George Kozlowski\footnote{Auburn University, Department of Mathematics and Statistics, e-mail: kozloga@auburn.edu}
\end{center}
\begin{abstract}
Let $M$ be a compact manifold of dimension at least 2. If $M$ admits a minimal homeomorphism then $M$ admits a minimal noninvertible map. 
\end{abstract}

\section{Introduction}
Fathi and Herman \cite{FH} showed that every compact connected manifold which admits a smooth, locally free effective action of the circle group has a smooth diffeomorphism, isotopic to the identity, which is minimal, and so all odd dimensional spheres admit a minimal diffeomorphism (see also \cite{Katok}). Every manifold that admits a minimal flow admits also a minimal homeomorphism. The converse does not hold even in dimension 2, as manifested on the Klein bottle \cite{Parry}. Less is known about the existence of minimal noninvertible maps. The circle does not admit such maps, despite admitting minimal rotations \cite{AK}. 
Both the 2-torus and Klein bottle admit minimal noninvertible maps \cite{KST}, \cite{Sotola}, but in dimension at least $3$ it has not been known which manifolds admit minimal noninvertible maps. By application of a result of B\'eguin, Crovisier and Le Roux \cite{Cro}, here we show in Theorem \ref{main} that any compact manifold of dimension at least 2 admits a minimal noninvertible map if it admits a minimal homeomorphism.  
\section{Preliminaries}
A self map $f:X\to X$ of a compact metric space is said to be \textit{minimal} if for ever point $x\in X$ the forward orbit $\{f^n(x):n\in\mathbb{N}\}$ is dense in $X$. 
\subsection{A Denjoy-type result}
The following result was obtained in \cite{Cro}, p.304, as a corollary of a far-reaching generalization of the Denjoy-Rees technique. It will be crucial to the proof of Theorem \ref{main}.
\begin{theorem}\label{Cro}(B\'eguin,Crovisier\&Le Roux)
Let $R$ be a homeomorphism on a compact manifold $M$, and $x$ a point of $M$ which is not periodic under $R$. Consider a compact subset $D$ of $M$ which can be written as the intersection of a strictly decreasing sequence $\{D_n:n\in\mathbb{N}\}$ of tamely embedded topological closed balls. Then there exist a homeomorphism $f: M\to M$ and a continuous onto map $\phi : M \to M $ such that $\phi \circ f = R \circ \phi$, and such that
\begin{itemize} 
\item $\phi^{−1}(x)=D$;
\item $\phi^{-1}(y)$ is a single point if $y$ does not belong to the $R$-orbit of $x$.
\end{itemize}
\end{theorem}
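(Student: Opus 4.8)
The relation $\phi\circ f=R\circ\phi$ with $f$ a homeomorphism and $\phi$ a continuous surjection says precisely that $\phi$ is the quotient map of an $f$-invariant, upper semicontinuous decomposition $\mathcal{G}$ of the domain copy of $M$. Writing $E_k:=\phi^{-1}(R^kx)$ for $k\in\mathbb{Z}$, invertibility of $f$ forces $f(E_k)=E_{k+1}$, so all $E_k$ are homeomorphic; the prescription is $E_0=D$, and the only nondegenerate elements of $\mathcal{G}$ are the $E_k$, every other fiber being a single point. In particular the second bullet will hold automatically once $\mathcal{G}$ is built with these nondegenerate elements. Thus the problem reduces to producing the decomposition $\mathcal{G}$ together with a homeomorphism $f$ permuting its elements and descending, through a chosen identification $M/\mathcal{G}\cong M$, to $R$. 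I would build $\mathcal{G}$, $f$ and $\phi$ simultaneously by the Denjoy--Rees blow-up technique, realizing $E_0$ as the given set $D$.

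Two topological facts make the quotient legitimate. First, since $D=\bigcap_n D_n$ is a strictly decreasing intersection of tamely embedded closed balls, $D$ is \emph{cellular} in $M$; as $f$ will be an ambient homeomorphism, each $E_k=f^k(D)$ is then cellular as well. Second, it is a standard fact of decomposition theory that a null sequence of pairwise disjoint cellular sets generates a shrinkable upper semicontinuous decomposition, so $M/\mathcal{G}$ is homeomorphic to $M$ and the quotient map is a genuine continuous surjection of $M$ onto $M$ --- this is what allows $\phi$ to have target $M$. To invoke it I must arrange $\mathrm{diam}(E_k)\to0$ as $|k|\to\infty$; here non-periodicity of $x$ is used, for then the orbit $\{R^kx\}$ consists of distinct points and disjoint neighborhoods of shrinking size can be chosen along it.

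Concretely I would obtain $f$ and $\phi$ as $C^0$-limits of finite-stage approximations. The elementary move is a single blow-up: collapsing a tamely embedded closed ball to a point yields a space homeomorphic to $M$ (the ball is cellular), so one may collapse $D_n$ to $x$, and compatible small balls over the finite orbit segment $\{R^kx:0<|k|\le n\}$ to the corresponding orbit points, obtaining a homeomorphism $f_n$ and a continuous surjection $\phi_n$ with $\phi_n\circ f_n=R\circ\phi_n$, in which $f_n$ permutes the inserted balls along the segment and agrees with a conjugate of $R$ elsewhere. Refining stage by stage along the nested sequence $D_n\downarrow D$, and shrinking the balls over the far-out orbit points, produces nested fibers whose intersection over $x$ is $\bigcap_n D_n=D$. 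At every stage the modification is supported on a null family of shrinking tame balls about the current orbit segment, and tameness (collared boundaries) supplies the shrinking and sliding homeomorphisms with quantitative control.

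The main obstacle is the simultaneous control of the two limits. One must schedule, at each stage, the target diameters of the collapsed balls and the distortion of the conjugating homeomorphisms to decrease summably fast, so that (i) $f_n\to f$ and $f_n^{-1}\to f^{-1}$ uniformly --- this is what guarantees that the limit $f$ is a genuine homeomorphism rather than a merely continuous, possibly noninvertible, map, and is the delicate point, a completeness argument in the group of homeomorphisms with the metric $\max\{\sup d(g,h),\sup d(g^{-1},h^{-1})\}$; and (ii) the decomposition remains upper semicontinuous and null, so that shrinkability applies and $\phi=\lim\phi_n$ is onto with exactly the prescribed fibers. The presence of a possibly large fiber $D$ forces the distortion of the conjugacies near $E_0$ to be concentrated in regions that $\phi$ maps into small neighborhoods of $x$, which keeps the uniform control intact. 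Passing the relations $\phi_n\circ f_n=R\circ\phi_n$ to the limit yields $\phi\circ f=R\circ\phi$, and the null--disjointness bookkeeping shows that a point $y$ off the $R$-orbit of $x$ lies in no nondegenerate fiber, so $\phi^{-1}(y)$ is a single point.
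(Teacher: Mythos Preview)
The paper does not prove this statement at all: Theorem~\ref{Cro} is quoted verbatim from B\'eguin--Crovisier--Le~Roux \cite{Cro} (Proposition~C.1, p.~304), where it appears as a corollary of their general Denjoy--Rees machinery, and the present paper only invokes it as a black box together with the observations recorded in Remark~\ref{null}. There is therefore no in-paper proof to compare your proposal against.

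That said, your sketch is an accurate high-level description of exactly the method \cite{Cro} uses: a blow-up along the aperiodic $R$-orbit of $x$, realized as a $C^0$-limit of finite-stage approximations, with the tame balls $D_n$ supplying the local charts for the shrinking and sliding moves, non-periodicity of $x$ permitting disjoint supports of vanishing diameter along the orbit, and simultaneous convergence of $f_n$ and $f_n^{-1}$ in the uniform metric forcing the limit $f$ to be a homeomorphism. One point worth flagging: your first two paragraphs read as though one might first build $f$, then form the decomposition $\mathcal{G}=\{f^k(D)\}$, and only then quotient to get $\phi$; but since the $E_k$ depend on $f$, this is circular, and the simultaneous limiting construction you outline in the last two paragraphs is not a convenience but the heart of the argument. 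That inductive scheduling---choosing at each stage the supports, target diameters, and distortion bounds so that everything converges compatibly---is precisely what occupies the bulk of \cite{Cro}, and your sketch correctly identifies it as the main obstacle without actually carrying it out.
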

\begin{remark}\label{null}
It is implicitly mentioned in the proof of Proposition C.1. in \cite{Cro}, p.304, that for the homeomorphism $f$ in Theorem \ref{main} we have  $\lim_{n\to\infty}\operatorname{diam}(f^n(D))=0$ (see also $\mathbf{B_3}$ and Proposition 3.1 on p. 270 of \cite{Cro}). The fact that if R is minimal and D has empty interior, then f is minimal is explicitly stated in Remarks following Proposition C.1. in \cite{Cro}, p.304.
\end{remark}
\subsection{Decomposition of Manifolds}
The theorem of this section derives from a result of Bing \cite{Bing} for upper semicontinous decompositions of $E^3$ into points and a countable family of flat arcs.
As stated by Daverman, it was a part of folklore that 
Bing's proof generalizes to $E^n$ for all dimensions $n$ greater then 2
(see e.g. \cite{Daverman} Chapter II, Theorem 2 of Section 5, p.23), and the extension to manifolds is stated as Exercise 1 of Section 8, p.61. For surfaces a more general result was inspired by 
Moore's Decomposition Theorem \cite{Moore} for the sphere and proved in general in \cite{Roberts} (see Theorem 1).  A modern treatment of this result also appears in \cite{Daverman}, (see Chapter IV, Theorem 1 of Section 25, p.187). 

For completeness a proof  is provided of the exercise by a method which applies to more general situations.

It will be useful to recall some definitions which can be found in 
\cite{Daverman}.
An arc $A$ in a manifold $M$ is called \textit{essentially flat} (p.~61) if there exist a neighborhood $U$ of $A$ and a homeomorphism $\psi$ of $U$ onto $E^n$ such that $\psi(A)$ is flat.
A surjective map $f \colon X \to Y$ is a \emph{near-homeomorphism}, if for any open cover $\mathcal{V}$ of the $Y$ there is a homeomorphism  $h \colon X \to Y$ 
such that for every $x$ in $X$ there is $V$ in $\mathcal{V}$ which contains $f(x)$ and $h(x)$ (p.~27).  If $\mathcal{G}$ is an upper semicontinuous of a space $X$, a \textit{realization} of $\mathcal{G}$ is a closed map $f \colon X \to Y$ such that $\mathcal{G}$  is precisely the set of all point-inverses $f^{-1}(y)$ ($y \in Y$) (see Chapter 1, Theorem 5, p.~11); in this case $X/\mathcal{G}$ is homeomorphic to $Y$.  A set $P$  is called \textit{$\mathcal{G}$-saturated} if $P$ is the union of elements of $\mathcal{G}$.

\begin{theorem}\label{Daverman}\cite{Daverman}
Suppose $\mathcal{G}$ is a upper semicontinuous decomposition of a manifold $M$ into points and a countable family of essentially flat arcs. Then $M/\mathcal{G}$ is homeomorphic to $M$. 
\end{theorem}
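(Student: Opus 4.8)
The plan is to verify that the decomposition $\mathcal{G}$ is \emph{shrinkable}. By the shrinking criterion (\cite{Daverman}, Chapter~I), shrinkability of $\mathcal{G}$ is equivalent to the decomposition map $\pi\colon M\to M/\mathcal{G}$ being a near-homeomorphism, and a near-homeomorphism onto $M/\mathcal{G}$ is in particular approximated by genuine homeomorphisms $M\to M/\mathcal{G}$, whence $M/\mathcal{G}\cong M$. (I take $M$ compact, as in the intended application; the general case uses the locally compact version of the criterion.) It therefore suffices, given a $\mathcal{G}$-saturated open cover $\mathcal{U}$ of $M$ and an $\varepsilon>0$, to produce a homeomorphism $h$ of $M$ with $\operatorname{diam} h(g)<\varepsilon$ for every $g\in\mathcal{G}$ and with $g\cup h(g)$ contained in a single member of $\mathcal{U}$ for every $g\in\mathcal{G}$; taking $\varepsilon$ to be a Lebesgue number of a prescribed cover $\mathcal{V}$, the first property puts each $h(g)$ in a member of $\mathcal{V}$, so shrinkability follows. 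As an arc of diameter less than $\varepsilon$ is already acceptable, only the ``large'' arcs $\mathcal{L}=\{A\in\mathcal{G}:\operatorname{diam} A\ge\varepsilon\}$ must be shrunk.

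The engine of the construction is a single kind of move made possible by essential flatness. Fix $A\in\mathcal{L}$ and a chart $\psi$ taking $A$ to a straight arc. By upper semicontinuity $A$ has arbitrarily small $\mathcal{G}$-saturated neighborhoods $W$, and such a $W$ may moreover be taken inside the chart domain, inside a thin tube about $A$ there, and inside a single member $U_A$ of $\mathcal{U}$ (any member of $\mathcal{U}$ meeting $A$ already contains it). A ``longitudinal compression'' of the corresponding chart-tube --- a homeomorphism of $M$, equal to the identity off a slightly larger but still small tube, that squeezes that tube along its length until the image of $W$ has diameter $<\varepsilon$ --- then crushes \emph{every} subset of $W$ to diameter $<\varepsilon$, in particular every element of $\mathcal{G}$ inside $W$ (equivalently, since $W$ is saturated, every element meeting $W$); and since its support lies inside $U_A$, the second required property holds for every element it moves. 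When $\mathcal{L}$ is finite one simply puts pairwise disjoint such tubes around its members and composes the compressions; this already settles the case in which the nondegenerate elements form a null family (as they do in the intended application, where for each $\varepsilon$ only finitely many arcs are large).

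For a general countable $\mathcal{L}$ one must organize infinitely many such moves coherently. The key structural fact is that $\mathcal{L}$ is, in effect, compact: a Hausdorff limit of arcs from $\mathcal{L}$ is a continuum lying in a single element of $\mathcal{G}$ (by upper semicontinuity), which is therefore itself a member of $\mathcal{L}$; hence $K=\bigcup\mathcal{L}$ is compact and $\mathcal{G}$-saturated, and the induced quotient of $K$ is a countable compact, hence scattered, metric space whose points are precisely the members of $\mathcal{L}$. One then peels the arcs off in the order dictated by this scattered structure: repeatedly enclose the finitely many arcs at the current ``top'' in pairwise disjoint thin saturated tubes and compress, thereby crushing those arcs together with all the arcs clustering at them, and pass to the smaller compact family that remains. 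Making the successive batches of tubes pairwise disjoint and disjoint from the (by then distorted) images of arcs already crushed, and checking that the residual family really is strictly ``smaller'' so that the process terminates, is the technical heart of the matter; it is the natural extension of Bing's shrinking argument for $E^n$ and of Moore's for the sphere.

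The main obstacle is exactly this accumulation bookkeeping: essential flatness controls each arc individually but says nothing about how other arcs pile up around it, so $\mathcal{L}$ may be infinite with an intricate nested accumulation pattern, and one cannot give each arc a private small neighborhood --- the resulting supports need not be locally finite, and an infinite composition of the compressions need not converge to a homeomorphism. What makes the argument go through is the combination of (a) compactness of $K=\bigcup\mathcal{L}$, which makes the set of large arcs a scattered compactum with only finitely many arcs at the top of its Cantor--Bendixson stratification, and (b) the fact that a single longitudinal compression crushes an entire saturated tube, hence a whole cluster of arcs, in one move; together these reduce the infinite problem, stratum by stratum, to the finite one already handled.
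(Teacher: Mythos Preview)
Your approach is sound in outline but takes a genuinely different route from the paper. The paper does \emph{not} attempt a direct shrinking argument on $M$. Instead it localizes: it first records (in a ``Preamble'') that Bing's $E^n$ theorem, taken as a black box from \cite{Daverman}, already gives a realizing near-homeomorphism whenever all the nondegenerate elements lie in a single compact set with a Euclidean neighborhood; it then covers $M$ by a locally finite family of $\mathcal{G}$-saturated compact sets $A_i$, each with such a neighborhood, applies the Preamble inside each $A_i$ in turn to get near-homeomorphisms $f_i$, and lets local finiteness force the infinite composition $f_r\cdots f_1$ to stabilize on each piece of $M$, so the limit realizes $\mathcal{G}$. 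The inductive bookkeeping is that the image of $A_r$ under $f_{r-1}\cdots f_1$ still has a Euclidean neighborhood and the images of the arcs are still essentially flat (parts (3) and (4) of the Preamble). Thus the paper reduces the manifold case to the $E^n$ case rather than reproving shrinkability from scratch.

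Your plan, by contrast, runs Bing's shrinking argument directly on $M$: longitudinal compressions in flat charts, and a Cantor--Bendixson peeling for the infinite case. That is a legitimate strategy --- and your observation that the null-sequence case (which is all the application needs) is already handled by finitely many disjoint compressions is correct --- but it forfeits the modularity the paper gains by citing the $E^n$ result. One technical point worth flagging: a single compression is supported in a tube $T$ strictly larger than the saturated set $W$ it crushes, and decomposition elements lying in $T\setminus W$ can be \emph{stretched} by the compression; your write-up controls the $\mathcal{U}$-condition for such elements but not their diameters. This is exactly the delicate interaction Bing's original argument has to manage, and it is not resolved by the Cantor--Bendixson termination alone --- at each stage one must also ensure that previously small arcs in the shells do not re-enter $\mathcal{L}$. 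The paper's localize-and-cite approach sidesteps this entirely.
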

\begin{proof} The following consequences of the generalization of Bing's theorem \cite{Bing} will be used repeatedly in the proof.  The first two assertions follow immediately from \cite{Daverman}; the third assertion follows from the second, the last from the first.

\textbf{Preamble}. 
Suppose $\mathcal{G}$ is a upper semicontinuous decomposition of a manifold $M$ into points and a countable family of essentially flat arcs for which there is a compact subset $A$ of $M$ which has a neighborhood homeomorphic to $E^n$ and which contains all the nondegenerate elements of $\mathcal{G}$. Then 
\begin{enumerate}
\item there is a map $f \colon M \to M$ which realizes $\mathcal{G}$; 
\item for any neighborhood $V$ of $A$ and any $\varepsilon >0$ there is a homeomorphism $h \colon M \to M$ such that for all points $p$ in $V$ the distance between $f(p)$ and $h(p)$ is less than  $\varepsilon$ and $h(p)=p$ for all points $p$ of $M$ not in $V$;
\item if $B$ is a compact subset of $M$ which has a neighborhood homeomorphic to $E^n$, the same is true of $f(B)$;
\item If $J$ is an essentially flat arc which contains no point of any nondegenerate element of $\mathcal{G}$, then $f(J)$ is also an essentially flat arc.
\end{enumerate}

With the preamble concluded the proof of the theorem begins with choices of 
locally finite covers $\{ V_i \}_{1 \le i < \omega}$ and  $\{ A_i \}_{1 \le i < \omega}$ of $M$ having the following properties for every index $i$
\begin{enumerate}
  \item  $A_i$ is compact and $\mathcal{G}$-saturated.
  \item  $A_i$ has a neighborhood homeomorphic to $E^n$. 
  \item  $V_i$ has compact closure and is $\mathcal{G}$-saturated.
  \item there is a finite subset $H$ of indices such that  $V_i$ contains a point of $V_j$ only if $j \in H$.
\end{enumerate}

Let $\mathcal{G}_1$ be the upper semicontinuous decomposition of $M$ whose  elements are those members $g \in \mathcal{G}$ for which $g \subset A_1$ and the singletons of points in $M$ which are not in $A_1$.  
It follows from the Preamble that there is a near homeomorphism $f_1 \colon M \to M$ which satisfies its conclusions (affixing the subscript 1).

For an induction suppose near homeomorphisms $f_i \colon M \to M$ have been obtained for $1 \le i < r$ for some $r$ with $1 \le r < \omega$.  Let $\mathcal{G}_r$ be the decomposition whose nondegenerate elements are members of $\mathcal{G}$ of the form $f_{r-1} \cdots f_1 (g)$ with $g \subset A_r$.  The set $f_{r-1} \cdots f_1 A_r$ has a neighborhood homeomorphic to $E^n$, and the nondegenerate elements of $\mathcal{G}_r$ are a countably family of essentially flat arcs.  Hence there is a near homeomorphism $f_r$ realizing $\mathcal{G}_r$ which satisfies the conclusions of the Preamble (affixing the subscript r).  

Inductively, a sequence of maps $f_1, f_2, \dots$ results such that the composition $f_r \cdots f_1$ is a near homeomorphism which realizes the decomposition consisting of those members of $\mathcal{G}$ which lie in $A_1 \cup \cdots \cup A_r$ and the singletons of all points of $M$ which do not belong to $A_1 \cup \cdots \cup A_r$.

For each $i$ there is $s$ such that $f_r \cdots f_1(p) = f_s \cdots f_1(p)$ for all $p$ in $V_i$ and all $r >s$.  It follows that the map $f \colon M \to M$ defined by $f(p)= \lim_{r < \omega} f_r \cdots f_1(p)$ is a map which realizes $\mathcal{G}$, and consequently $M$ is homeomorphic to $M/\mathcal{G}$.
\end{proof}
\section{Main Result}
\begin{theorem}\label{main} 
Let $M$ be a compact manifold of dimension at least 2. If $M$ admits a minimal homeomorphism then $M$ admits a minimal noninvertible map. 
\end{theorem}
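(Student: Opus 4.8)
The plan is to combine Theorem \ref{Cro} with Theorem \ref{Daverman}. Start with a minimal homeomorphism $R\colon M\to M$. Since $M$ has dimension at least $2$ it is an infinite compact metric space, so minimality of $R$ forces every point to be non-periodic; fix such a point $x$. Inside a coordinate chart of $M$ I would choose a straight (hence flat) arc and let $D$ be that arc, realized as the intersection of a strictly decreasing sequence of tamely embedded closed balls (the closed $\tfrac1n$-neighborhoods of the arc inside the chart, each a tame topological ball, shrinking to the arc); then $D$ is an essentially flat arc with empty interior. Apply Theorem \ref{Cro} to obtain a homeomorphism $f\colon M\to M$ and a continuous surjection $\phi\colon M\to M$ with $\phi\circ f=R\circ\phi$, $\phi^{-1}(x)=D$, and $\phi^{-1}(y)$ a singleton whenever $y\notin\{R^n(x):n\in\mathbb Z\}$. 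By Remark \ref{null}, since $R$ is minimal and $D$ has empty interior, $f$ is minimal, and $\operatorname{diam}(f^n(D))\to 0$ as $n\to\infty$.

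From $\phi\circ f=R\circ\phi$ together with the bijectivity of $f$ and surjectivity of $\phi$ one gets $f^n(D)=\phi^{-1}(R^n(x))$ for every $n\in\mathbb Z$. Because $x$ is not periodic, these sets are pairwise disjoint; each is an arc (as $f$ is a homeomorphism); and each is essentially flat, since if $\psi\colon U\to E^n$ flattens $D$ then $\psi\circ f^{-n}\colon f^n(U)\to E^n$ flattens $f^n(D)$. Moreover $\{f^n(D):n\ge 0\}$ is a null sequence of pairwise disjoint arcs, because $\operatorname{diam}(f^n(D))\to 0$. Let $\mathcal G$ be the decomposition of $M$ whose nondegenerate elements are the arcs $f^n(D)$, $n\ge 0$, together with the singletons of all remaining points. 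A standard argument (a null sequence of pairwise disjoint compacta together with singletons yields a closed quotient map) shows $\mathcal G$ is upper semicontinuous, so Theorem \ref{Daverman} provides a homeomorphism $\theta\colon M/\mathcal G\to M$; write $q\colon M\to M/\mathcal G$ for the quotient map.

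Next observe that $f$ carries each element of $\mathcal G$ into an element of $\mathcal G$: indeed $f(f^n(D))=f^{n+1}(D)\in\mathcal G$ for $n\ge 0$, and singletons go to singletons. Hence $q\circ f$ is constant on the fibers of $q$, so there is a continuous map $\bar f\colon M/\mathcal G\to M/\mathcal G$ with $\bar f\circ q=q\circ f$. Since $f$ and $q$ are onto, so is $\bar f$; and since $q$ is a continuous surjective semiconjugacy from the minimal map $f$, the factor $\bar f$ is minimal. Finally $\bar f$ is not injective: $f^{-1}(D)=\phi^{-1}(R^{-1}(x))$ is an arc that is disjoint from every $f^n(D)$ with $n\ge 0$ (again because $x$ is not periodic), so all of its points are singleton elements of $\mathcal G$; picking $a\ne b$ in this arc we get $q(a)\ne q(b)$ but $\bar f(q(a))=q(f(a))=q(D)=q(f(b))=\bar f(q(b))$. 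Therefore $g:=\theta\circ\bar f\circ\theta^{-1}\colon M\to M$ is a minimal noninvertible continuous self-map of $M$.

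The step I expect to require the most care is the verification that $\mathcal G$ genuinely is an upper semicontinuous decomposition of $M$ into points and a countable family of essentially flat arcs — specifically that essential flatness is preserved under the homeomorphism $f$ and that $\operatorname{diam}(f^n(D))\to 0$ (which is precisely the content of Remark \ref{null}), so that Theorem \ref{Daverman} is applicable. Once this is in place, the remaining steps — descending $f$ through $q$, transferring minimality to the factor, and exhibiting two points collapsed by $\bar f$ — are routine.
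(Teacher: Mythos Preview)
Your proof is correct and follows essentially the same route as the paper: blow up a point to an essentially flat arc via Theorem~\ref{Cro}, collapse the forward images of that arc using Theorem~\ref{Daverman} (with the null-sequence property from Remark~\ref{null} giving upper semicontinuity), and observe that the induced factor map is minimal but collapses the uncollapsed arc $f^{-1}(D)$ to a point. The only cosmetic difference is that the paper indexes the collapsed arcs by $n>0$ (so the noninvertibility shows up at $A_0$) whereas you use $n\ge 0$ (so it shows up at $f^{-1}(D)$); you also spell out a few verifications---essential flatness under $f$, the intersection-of-balls description of $D$, and why $\bar f$ is well defined---that the paper leaves implicit.
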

\begin{proof}
Let $h:M\to M$ be a minimal homeomorphism, and $A_0$ be an essentially flat arc in $M$. By Theorem \ref{Cro} there exist a homeomorphism $f: M\to M$ and a continuous onto map $\phi : M \to M $ such that $\phi \circ f = h \circ \phi$, and such that $\phi^{−1}(x)=A_0$, and $\phi^{-1}(y)$ is a single point if $y$ does not belong to the $h$-orbit of $x$. In particular, $\phi^{−1}(h^n(x))=A_n$ is an essentially flat arc for every $n\in \mathbb{Z}$. Note that $f$ is minimal, as each $A_n$ is nowhere dense; i.e. $f$ is an almost one-to-one extension of a minimal homeomorphism. Let $\mathcal{G}$ be the decomposition of $M$ into arcs $\{A_n:n>0\}$ and points that are not in $\bigcup_{n=1}^\infty A_n$. Since by Remark \ref{null} the family $\{A_n:n>0\}$ forms a null sequence, the decomposition $\mathcal{G}$ is upper semicontinuous (see e.g. Proposition 3, p. 14 in \cite{Daverman}). Then by Proposition \ref{Daverman} the quotient space $M/\mathcal{G}$ is homeomorphic to $M$, and $f$ induces a map $\hat{f}$ such that $\tau \circ \hat{f} = f \circ \tau$, where $\tau:M\to M/\mathcal{G}$ is the decomposition map. Note that $\hat{f}$ is minimal, as a factor of a minimal map, and $\hat{f}$ is noninvertible at $\tau(A_0)$; i.e. $\hat{f}(\tau(A_0))$ is a point; cf. \cite{KST}. This completes the proof.
\end{proof}
Theorem \ref{main} combined with the result of Fathi and Herman gives the following corollary, which can be considered as a counterexample to a ''noninvertible Gottschalk Conjecture". 
\begin{corollary}
All odd dimensional spheres admit a minimal noninvertible map.  
\end{corollary}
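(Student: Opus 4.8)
The plan is to obtain the statement with essentially no new argument, by chaining the Fathi--Herman theorem recalled in the introduction with Theorem \ref{main}.

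\emph{Step 1: produce a minimal homeomorphism on each odd-dimensional sphere.} Realize $S^{2k+1}$ as the unit sphere of $\mathbb{C}^{k+1}$. The circle group acts on it by $\lambda\cdot(z_0,\dots,z_k)=(\lambda z_0,\dots,\lambda z_k)$, and this action is smooth, effective and free, hence in particular locally free. By the theorem of Fathi and Herman \cite{FH}, $S^{2k+1}$ therefore carries a smooth diffeomorphism, isotopic to the identity, which is minimal; in particular it carries a minimal homeomorphism.

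\emph{Step 2: apply Theorem \ref{main}.} For $k\ge 1$ the sphere $S^{2k+1}$ is a compact manifold of dimension $2k+1\ge 3\ge 2$ that admits a minimal homeomorphism, so Theorem \ref{main} supplies a minimal noninvertible self-map of $S^{2k+1}$. This establishes the corollary for every odd-dimensional sphere of dimension at least $3$. The remaining odd-dimensional sphere, $S^1$, is genuinely excluded: although it admits minimal rotations, it carries no minimal noninvertible map \cite{AK}, which is precisely why the dimension hypothesis in Theorem \ref{main} cannot be dropped.

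I do not expect any real obstacle: both ingredients are quoted as black boxes, and the only verifications needed are that the standard circle action on $S^{2k+1}$ is locally free and effective, which is immediate, and that $\dim S^{2k+1}\ge 2$ for $k\ge 1$, which is equally immediate. The only place where substantial work is hidden is inside the cited Fathi--Herman construction of the minimal diffeomorphism, which this note takes for granted.
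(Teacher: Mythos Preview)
Your proposal is correct and follows exactly the paper's own route: the corollary is stated immediately after Theorem~\ref{main} with the one-line justification that it follows by combining Theorem~\ref{main} with the Fathi--Herman result, and your two steps spell out precisely that combination. Your additional remark that $S^1$ must be excluded (by \cite{AK}) is a valid sharpening of the corollary's literal wording, since Theorem~\ref{main} only applies in dimension $\ge 2$; the paper does not flag this in the corollary itself, though it is noted earlier in the introduction.
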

Other recent developments, concerning admissability of minimal noninvertible maps on topological spaces, can be found in \cite{BCO}, \cite{BCFK}, \cite{BKLO} and \cite{SS}. 
\section{Acknowledgements}
The first author is grateful to Krystyna Kuperberg for valuable comments. This work was supported by National Science Centre, Poland (NCN), grant no. 2015/19/D/ST1/01184.

\bibliographystyle{alpha}

\begin{thebibliography}{99}
\bibitem{AK} {\sc Auslander, J.,  Katznelson, Y.} \emph{Continuous maps of the circle without periodic points.} \textbf{Israel J. Math. 32} (1979), 375--381.

\bibitem{Bing} {\sc Bing, R. H.}
\textit{Upper semicontinuous decompositions of $E^3$.} 
\textbf{Ann. of Math.} \textbf{65} (1957), 363--374.

\bibitem{BCO} {\sc Boro\'nski, J. P.; Clark, Alex; Oprocha, P.}
\textit{A compact minimal space $Y$ such that its square $Y\times Y$ is not minimal.} 
\textbf{Adv. Math.} \textbf{335} (2018), 261--275.

\bibitem{BCFK} {\sc Boro\'nski, J. P.; \v Cin\v c, J.; Fory\'s-Krawiec, M.}
\textit{On rigid minimal spaces.} 
\textbf{Journal of Dynamics and Differential Equations}, DOI 10.1007/s10884-020-09845-4

\bibitem{BKLO} {\sc Boro\'nski, J. P.; Kennedy, J.; Liu, X-C.; Oprocha, P.}
\textit{Minimal non-invertible maps on the pseudo-circle} submitted, 
\textbf{arXiv:}1810.07688

\bibitem{Cro} {\sc B\'eguin, F.; Crovisier, S.; Le Roux, F.} \textit{Construction of curious minimal uniquely ergodic homeomorphisms on manifolds: the Denjoy-Rees technique.} \textbf{Ann. Sci. \'Ecole Norm. Sup.}, \textbf{40} (2007),  251--308. 

\bibitem{Daverman}  {\sc Daverman, R. J.} \textit{Decompositions of manifolds.}  Pure and Applied Mathematics, 124. Academic Press, Inc., Orlando, FL, 1986. xii+317

\bibitem{FH} {\sc Fathi, A.; Herman, M. R.}
\textit{Existence de diff\'eomorphismes minimaux} Dynamical systems, Vol. I—Warsaw, pp. 37–59. \textbf{Astérisque}, No. 49, Soc. Math. France, Paris, 1977. 

\bibitem{Katok} {\sc Katok, A.B.} \textit{Minimal diffeomorphisms on principal 1-bundles}  Abstracts of sixth All-Union Topological Conference, Metsniereba Tbilisi, 1972, p.63.

\bibitem{KST} {\sc Kolyada, S., Snoha, L'., Trofimchuk, S.} {\it Noninvertible minimal maps.} \textbf{Fund. Math.} \textbf{168}  (2001), 141--163.

\bibitem{Moore} {\sc Moore, R. L.}
\textit{Concerning upper semicontinuous collections of continua. 
} \textbf{Trans. Amer. Math. Soc.} \textbf{27} (1925), 416--428. 

\bibitem{Parry} {\sc Parry, W.}
\textit{A note on cocycles in ergodic theory.} 
\textbf{Compositio Math. 28} (1974), 343--350.

\bibitem{Roberts} {\sc Roberts, J.~H., Steenrod, N.~E.}
\textit{Monotone decompositions of two-dimensional manifolds.} 
\textbf{Ann. of Math.} \textbf{39} No.~4 (1938), 363--374.

\bibitem{SS} {\sc Snoha, L'.; \v Spitalsk\'y, V.} 
\textit{Minimal space with non-minimal square.} \textbf{arXiv:1803.06323} (2018)


\bibitem{Sotola} {\sc Sotola, J., Trofimchuk, S} \textit{Construction of minimal non-invertible skew-product maps on 2-manifolds.} \textbf{Proc. Amer. Math. Soc. 144} (2016), 723--732
\end{thebibliography}

\end{document}